\newtheorem{problem}    {Problem}
\newtheorem{definition} {Definition}
\newtheorem{theorem}    {Theorem}
\newtheorem{lemma}      {Lemma}
\title {\LARGE \bf Revenue Maximization in  Spectrum Auctions for Dynamic Spectrum Access}
\author{
{\Large {Ali Kakhbod, Ashutosh Nayyar and Demosthenis Teneketzis}} \\
Department of Electrical Engineering and Computer Science \\
University of Michigan, Ann Arbor, MI, USA.\\
Email: {\texttt{\{akakhbod,anayyar,teneket\}@umich.edu}}}        
\begin{document}
\maketitle

\begin{abstract}
We investigate  revenue maximization problems in auctions for dynamic spectrum access. We consider the frequency division and spread spectrum methods of dynamic spectrum sharing. In the  frequency division method,  a primary spectrum user  allocates portions of spectrum  to different secondary users. In the spread spectrum method, the primary user allocates transmission powers to each secondary user. In both cases, we assume that a secondary user's utility function is linear in the rate it can achieve by using the available spectrum/power. Assuming strategic users, we present incentive compatible, individually rational and revenue-maximizing mechanisms for the two scenarios.
\end{abstract}
\begin{keywords}
Dynamic Spectrum Access, Spectrum Auctions, Bayesian Mechanism Design, Revenue maximization.
\end{keywords}
\section{Introduction} Traditional spectrum allocations are done in a static manner where long-term spectrum licenses covering large geographical areas are sold.    
Under this type of static allocation, there is increasing
evidence that spectrum resources are not being efficiently
utilized \cite{chiang2007}. At the
same time, wireless devices are enjoying ever greater capability to detect spectrum availability and flexibility to adjust their operating frequencies and transmission powers. These observations have led to a
push for dynamic spectrum sharing where the primary user may lease spectrum/power to secondary users.
\par
Various kinds of auctions have been proposed for the spectrum sharing problem. In the frequency division method, the primary spectrum owner partitions its available spectrum into $n$ sub-bands and a multiple product auction takes place. Spectrum sharing by frequency devision has been investigated in \cite{42} and \cite{liu2009}.
In \cite{42},  the authors  use a sequential second price auction mechanism where each unit is sequentially allocated using a second-price auction. They study the equilibrium of such an auction and characterize the resulting efficiency loss. In \cite{liu2009}, the authors consider users with strict spectrum demands across multiple channels and find revenue-maximizing auctions.

In contrast to the frequency division method of spectrum sharing, spread spectrum methods allow different secondary users to use the same spectrum. The users can distribute power over the available frequency band so as to minimize the interference they face or to maximize their rates. Such methods were studied in \cite{101,102,38,104} in a game-theoretic/mechanism design context.

In this paper, we consider both the frequency division and spread spectrum methods of spectrum sharing. In the frequency division method, each secondary user communicates over its alloted spectrum sub-band. We assume that users in different sub-bands do not interfere with each other. In the spread spectrum method, each secondary user spreads its allotted power uniformly over the entire spectrum. In this case, different users' transmissions interfere with each other. 
\par
We model the spectrum/power as a perfectly divisible commodity that the primary user can divide among the secondary users and charge  payments to the secondary users so as to maximize its revenue. Since the primary user does not have complete information about the secondary users' utilities, it has to solicit information from them. The allocation process proceeds as follows: first, the primary user announces a spectrum/power allocation rule that maps the private information reported by the secondary users to spectrum/power allocations. In addition, the primary user announces a payment rule that maps the private information reported by the secondary users to payments to be charged to the secondary users. Once the secondary users report their private information, the primary user decides the distribution of spectrum/power and payments according to the announced allocation and payment rules. The spectrum/power allocation rule and the payment rule are collectively referred to as the mechanism chosen by the spectrum owner. 

 We assume that the users' utilities are linear in the expected rate they can achieve from a given amount of spectrum. We further assume that a user's private information is entirely captured by the slope of this linear relation. We interpret this slope as a users' ``willingness to pay'' for the expected rate it may get. We model the secondary users as strategic agents. Thus, once the primary user fixes its allocation and payment rule, a Bayesian game is played among the users. The spectrum owner has to find allocation and payment rules that maximize its revenue while ensuring that truth-telling is an equilibrium of the induced Bayesian game among the users. The primary user's problem belongs to the class of Bayesian mechanism design. Bayesian mechanism design is a branch of mathematical economics (see \cite{tilman2010,Klemperer2004,krishna2002,milgrom2000} and references therein). Our work is philosophically similar to Myerson's optimal auction (\cite{myerson}) of an indivisible good. However, since we assume perfect divisibility of spectrum, our mechanism differs from the mechanism in \cite{myerson}.
 \par
 The key difference between the frequency division and spread spectrum method is the way the secondary users affect each other's utility. In the frequency division method, since secondary users in different spectrum sub-bands do not interfere, a secondary user's utility depends only on its own share of the spectrum. On the other hand, in the spread spectrum method a secondary user's utility depends not only on its share of power but also on the power allotted to other users that interfere with it. This difference in the nature of the two spectrum sharing  methodologies results in different optimization problems for the revenue-maximizing primary user.
 
\emph{Contribution of the Paper:} We formulate two revenue-maximization problems associated with spectrum sharing by means of frequency division and spread spectrum methods with strategic secondary users. Under the assumption of linear utilities, we present necessary and sufficient conditions that an incentive compatible and individually rational mechanism must satisfy. Further, under a regularity condition, we identify and interpret solutions for the revenue-maximization problem in the frequency division and spread spectrum scenarios.    

\emph{Organization of the Paper:}
The rest of the paper is organized as follows. We formulate the primary user's optimization problem for the frequency division method in Section \ref{sec:prob_form}. We introduce incentive compatibility and individual rationality as constraints in the primary user's optimization problem in Section \ref{sec:ic_ir}. In Section \ref{sec:analysis}, we characterize necessary and sufficient conditions for a mechanism to satisfy these constraints. We further provide a candidate solution of the primary user's problem, and interpret the proposed mechanism. In Section \ref{sec:prob_form_SS} we formulate the primary user's optimization problem for the  spread spectrum method. In Section \ref{sec:analysis_SS} we characterize necessary and sufficient conditions for incentive compatibility and individual rationality and provide a candidate solution for the primary user's problem. We conclude in Section \ref{sec:con}.

\emph{Notation:} The set of users is denoted by $\mathcal{N}=\{1,2,\cdots,N\}$. For a vector $\theta = (\theta_1,\theta_2,\ldots,\theta_N)$, we use $\theta_{-i}$ to refer to $(\theta_1,\theta_2,\ldots,\theta_{i-1},\theta_{i+1},\ldots,\theta_N)$. We use the symbol $\mathds{E}$ for the expectation operator. The subscript used with $\mathds{E}$ denotes the random variables with respect to which the expectation is taken.

\section{Frequency Division Method}\label{sec:prob_form}

We consider a spectrum market with a primary spectrum owner (seller) that owns $W$ Hz of bandwidth and $N$ potential secondary users (buyers). We assume a frequency division multiplexing model for spectrum sharing, that is, the seller can divide the available spectrum into different sub-bands and allocate them to different users. We assume the spectrum is a perfectly divisible commodity and the size  of sub-band for each user is decided by the seller. We now explain various components of our model in detail:
\par
\begin{enumerate}
\item \emph{The users:} We assume each user is a distinct transmitter-receiver pair that can communicate over a channel with Gaussian noise. If user $i$ receives $x$ Hz of bandwidth and its channel gain is $h_{ii}$, then it gets a rate given as:
\begin{equation}\label{eq:rate}
R(x) = x\log\Big(1+\frac{h_{ii}P}{N_{0}x}\Big)
\end{equation}
We assume that at the time of spectrum allocation, users and the seller only have probabilistic information about the channel gains. That is, for each user $i$, the channel gain $h_{ii}$ is a random variable with density (or PMF) $g_i$. The density $g_i$ is common knowledge among the users and the seller. Thus, if user $i$ receives $x$ Hz of bandwidth, its \emph{expected} rate is given as
\begin{equation}\label{eq:ex_rate}
\psi_i(x) = \int x\log\Big(1+\frac{h_{ii}P}{N_{0}x}\Big)g_i(h_{ii})dh_{ii}
\end{equation}
We assume that the integral in (\ref{eq:ex_rate}) is well-defined for all $0 \leq x \leq W$. 
\par
Further, we assume that a user's utility is characterized by a single real number $\theta_i$. We call $\theta_i$ user $i$'s \emph{type}.
 If user $i$ has type $\theta_i$, its utility from getting $x$ Hz of bandwidth and paying $t$ amount of money is given as:
\begin{equation}\label{eq:utility}
u_i(x,t,\theta_i) = \theta_{i}\psi_i(x) - t
\end{equation}
In other words, a user's utility is linear in the expected rate and the monetary payment. We can interpret $\theta_i$ as user~i's ``willingness to pay'' - it is the maximum price per unit of expected rate that the user is willing to pay. 
\par
We assume that $\theta_i $, $i \in \mathcal{N}$ are independent random variables; We assume that for each user $i$, $\theta_i$ is private information, that is, only user $i$ knows the true value of its type; We assume that $\theta_i \in \Theta_i := [\theta^{min}_i,\theta^{max}_i]$ and the sets $\Theta_i$ are common knowledge.  All users other than user $i$ and the seller have a prior probability density function $f_i(\cdot)$ (with the corresponding CDF being $F_i(\cdot)$) on $\theta_i$; we assume that $f_i(\theta_i) >0$, for $\theta_i \in [\theta_i^{min},\theta_i^{max}] $. We assume that these densities are common knowledge. We define $\theta := (\theta_1,\theta_2,\ldots,\theta_N)$ and $\Theta := \times_{i=1}^{N}[\theta_i^{min},\theta_i^{max}]$.
\item \emph{The Seller:} We assume that the seller knows the distributions $g_i$ of each user's channel gain and the distributions $f_i$ of each user's type. We define $f(\theta) = \prod_{i=1}^Nf_i(\theta_i)$ and $f(\theta_{-i}) = \prod_{j\neq i}f_j(\theta_j)$. We assume that the seller's utility is the total money he gets from the users. 

\item \emph{The Mechanism:}  The seller announces an allocation rule $q=(q_1,q_2,\cdots,q_N)$ and a payment rule\\ $t=(t_1,t_2,\cdots,t_N)$,
\begin{eqnarray}
q_i:\Theta \rightarrow [0,W] \quad \mbox{for} \  i=1,2,\cdots,N,\\
t_i:\Theta \rightarrow \textit{R}_+ \quad \mbox{for} \  i=1,2,\cdots,N.
\end{eqnarray}
\begin{definition}
The mechanism   $(q,t)$ is \emph{feasible} if $\sum_{i=1}^{N}q_i(\theta) \leq W$.
\end{definition}
The seller asks the users to report their types. If the type vector reported is $\theta$, $q_i(\theta)$ is the amount of spectrum given to user $i$ and $t_i(\theta)$ is the payment charged to user $i$. 
\par
Once the mechanism $(q,t)$ has been announced, it induces a Bayesian game among the users. Each user observes his own type but has only a probability distribution on other players' types. A user can report any type (not necessarily its true type) if it expects a higher utility by mis-reporting.
\end{enumerate}

\subsection{Incentive Compatibility and Individual Rationality}\label{sec:ic_ir}
We define the following properties for a mechanism.
\begin{enumerate}
\item \emph{Incentive Compatibility:}  A mechanism $(q,t)$ is said to be incentive compatible if for each $i \in \mathcal{N}$ and $\theta_i \in \Theta_i$, we have
\begin{align}
 &\mathds{E}_{\theta_{-i}}\left[\theta_i\psi_i(q_i(\theta)) -  t_i(\theta)\right] \notag\\
 &\geq \mathds{E}_{\theta_{-i}}\left[\theta_i\psi_i(q_i(r_i,{\theta_{-i}}))- t_i(r_i,\theta_{-i})\right] \quad \forall \  r_i \in \Theta_i. \label{eq:IC}
\end{align}
Incentive compatibility guarantees that  truthful reporting is a Bayesian Nash equilibrium for the game induced by the mechanism. That is, each user prefers truthful reporting to any other strategy given that all other users are truthful. 
\item \emph{Individual Rationality:} A mechanism $(q,t)$ is said to be individually rational if for each $i \in \mathcal{N}$ and $\theta_i \in \Theta_i$, we have
\begin{align}
 &\mathds{E}_{\theta_{-i}}\left[\theta_i\psi_i(q_i(\theta)) -  t_i(\theta)\right] \geq 0. \label{eq:IR}
\end{align}
Individual  rationality guarantees that at the truthful Bayesian Nash equilibrium, each user has a utility no less than that obtained by not participating in the spectrum allocation process at all.
\end{enumerate}
\par
In our search for finding the revenue-maximizing mechanism, we will restrict to the class of mechanisms that are incentive compatible and individual rational. The revelation principle for Bayesian mechanism design (\cite{myerson}) ensures that any spectrum allocation and payments achieved at an equilibrium of a Bayesian game of any mechanism can be achieved by an incentive compatible mechanism. Thus, restricting to incentive compatible mechanism incurs no loss of revenue. We impose individual rationality as a natural requirement for a mechanism that induces players to voluntarily participate in the mechanism.
\subsection{Revenue Maximization}
We have the following problem for the seller.
\begin{problem}
 The sellers's optimization problem is to choose a feasible mechanism $(q,t)$  that satisfies equations (\ref{eq:IC}) and (\ref{eq:IR}) and maximizes his expected revenue given as:
\[ \mathds{E}_{\theta}\{\sum_{i=1}^{N} t_i(\theta)\}\]
 \end{problem}

\subsection{Analysis} \label{sec:analysis}
 We start with the following lemma for the function $\psi$ defined in (\ref{eq:ex_rate}).
 \begin{lemma}\label{lemma:concavity}
 The function $\psi(x)$ is non-decreasing and concave in $x$.
 \end{lemma}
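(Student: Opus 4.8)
The plan is to reduce the claim to a pointwise statement about the integrand of (\ref{eq:ex_rate}) and then invoke the fact that averaging against a probability density preserves both monotonicity and concavity. First I would fix a realization $h$ of the channel gain and set $a := hP/N_{0} \ge 0$, so that the integrand becomes $r_a(x) := x\log\!\big(1 + a/x\big)$ for $x \in (0,W]$, with the continuous extension $r_a(0) := 0$. The core of the argument is to show that each such $r_a$ is non-decreasing and concave on $[0,W]$.

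For monotonicity, I differentiate: $r_a'(x) = \log(1+a/x) - \frac{a}{x+a}$. Writing $u := a/x \ge 0$, this equals $\log(1+u) - \frac{u}{1+u}$, which is non-negative by the elementary inequality $\log(1+u) \ge \frac{u}{1+u}$ valid for all $u \ge 0$ (equality iff $u=0$); hence $r_a' \ge 0$. For concavity, I would either differentiate once more, obtaining $r_a''(x) = -\dfrac{a^{2}}{x(x+a)^{2}} \le 0$ for $a\ge 0$, $x>0$, or, more conceptually, observe that $r_a$ is the perspective of the concave function $s \mapsto \log(1+as)$ restricted to the line $y=1$: the map $(x,y)\mapsto x\log(1+ay/x)$ is jointly concave on $\{x>0\}$, and concavity is preserved under restriction to an affine subspace, so $x\mapsto r_a(x)$ is concave. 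Either route establishes the pointwise result.

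Finally, $\psi(x) = \int r_{hP/N_{0}}(x)\, g(h)\,dh$ is an average, with non-negative weights $g(h)\,dh$ that integrate to one, of the functions $r_a$, each of which is non-decreasing and concave. Since these two properties pass through non-negative linear combinations (and the limits thereof defining the integral), $\psi$ is non-decreasing and concave on $[0,W]$. The only points requiring care are the well-definedness of the integral, which is assumed below (\ref{eq:ex_rate}), and the value at $x=0$, covered by the continuous extension $r_a(0)=0$. I expect the (still elementary) main obstacle to be the concavity of the integrand; the perspective-function viewpoint makes this transparent and, as a bonus, sidesteps any need to differentiate under the integral sign.
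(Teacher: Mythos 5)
Your proof is correct, and while the derivative computations coincide with the paper's, the overall route differs in two worthwhile ways. The paper differentiates $\psi$ under the integral sign twice, gets $\psi''(x)=\int\bigl[-\tfrac{(h_{ii}P)^2}{(h_{ii}P+N_0x)^2x}\bigr]g(h_{ii})\,dh_{ii}\leq 0$ for concavity, and then deduces monotonicity \emph{indirectly}: since $\psi'$ is non-increasing and $\lim_{x\to\infty}\psi'(x)=0$, it must be that $\psi'\geq 0$. You instead work pointwise with the integrand $r_a(x)=x\log(1+a/x)$ and only average at the end, which sidesteps any justification of differentiation under the integral sign; and for monotonicity you prove $r_a'(x)=\log(1+u)-\tfrac{u}{1+u}\geq 0$ directly from the elementary inequality $\log(1+u)\geq\tfrac{u}{1+u}$ for $u\geq 0$, rather than appealing to the behavior of $\psi'$ at $0$ and at infinity (the latter requiring one to reason outside the domain $[0,W]$). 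Your perspective-function observation --- that $r_a$ is the restriction to $y=1$ of the jointly concave perspective of $s\mapsto\log(1+as)$ --- is a nice conceptual alternative to the second-derivative computation that the paper does not mention. The paper's approach is marginally shorter on the page; yours is more self-contained and makes the pointwise structure of the claim explicit.
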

\begin{proof}
See Appendix~\ref{sec:concavity}
\end{proof}  
 \subsubsection{Characterizing Incentive Compatibility and Individual Rationality}
    In this Section, we derive necessary and sufficient conditions for a mechanism to be incentive compatible and individually rational. Let $(q,t)$ be any feasible mechanism selected by the seller. In order to characterize incentive compatibility and individual rationality for user $i$, we will adopt user $i$'s perspective. Let $\theta_i$ be the user $i$'s type. User $i$ knows his own type. However, when the seller asks the user to report his type, he may report any type $r_i$ between $\theta_i^{min}$ and $\theta_i^{max}$. We define the following functions:
\begin{definition}
Given a mechanism $(q,t)$, we define for each $\theta_i,r_i \in \Theta_i$,
    \begin{eqnarray} \label{eq:ex_amt} \label{eq:ex_tax}
    Q_i(r_i) &:=& \mathds{E}_{\theta_{-i}}[\psi_i(q_i(r_i,\theta_{-i}))],\\
     T_i(r_i) &:=& \mathds{E}_{\theta_{-i}}[t_i(r_i,\theta_{-i})], \\
     U_i(\theta_i,r_i) &:=& \theta_iQ_i(r_i)-T_i(r_i).
   \end{eqnarray}
\end{definition}
$Q_i(r_i)$ is the expected rate under the given mechanism that user $i$ will get if he reports $r_i$ while all other users report truthfully. Note that the expectation is over the type of all other users $\theta_{-i}$. Similarly, $T_i(r_i)$ is the expected payment that user $i$ will pay when it reports $r_i$. Also, $U_i(\theta_i,r_i)$ is the expected utility  for user $i$ if its type is $\theta_i$ and it reports $r_i$. 

We can re-write the incentive compatibility and individual rationality constraints for user~$i$ in terms of the functions defined above.\\
Incentive Compatibility for user $i$:
\begin{eqnarray}
 U_i(\theta_i,\theta_i)&\geq& U_i(\theta_i,r_i), \mbox{~~~~~$\theta_i,r_i \in \Theta_i$} \notag \\
 \iff \theta_iQ_i(\theta_i)-T_i(\theta_i)&\geq&\theta_iQ_i(r_i)-T_i(r_i), \mbox{~$\theta_i,r_i \in \Theta_i$}\notag
\end{eqnarray}
Individual Rationality for user $i$:
\begin{eqnarray}
 U_i(\theta_i,\theta_i)&\geq& 0, \mbox{~~~~~$\theta_i \in \Theta_i$} \notag \\
 \iff \theta_iQ_i(\theta_i)-T_i(\theta_i)&\geq& 0, \mbox{~$\theta_i\in \Theta_i$}\notag
\end{eqnarray}
We can now characterize incentive compatibility and individual rationality by the following theorem.
\medskip
\begin{theorem} \label{thm:th1} 
A mechanism $(q,t)$ is incentive compatible and individually rational if and only if $Q_i(r_i)$ is non-decreasing in $r_i$ and 
\begin{eqnarray} \label{eq:tax_equation}
T_i({r_i})= K_i + r_iQ_i(r_i)  -\int_{\theta^{min}_i}^{r_i}Q_i(s)ds,
\end{eqnarray}
where $K_i = (T_i(\theta^{min}_i)- \theta^{min}_iQ_i(\theta^{min}_i)) \leq 0$.

\end{theorem}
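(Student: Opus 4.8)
The plan is to prove both directions by analysing the reduced-form equilibrium utility $U_i(\theta_i):=U_i(\theta_i,\theta_i)=\theta_iQ_i(\theta_i)-T_i(\theta_i)$, exploiting that, under incentive compatibility, $U_i$ is the upper envelope of the family of affine maps $\theta_i\mapsto\theta_iQ_i(r_i)-T_i(r_i)$, $r_i\in\Theta_i$.

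\emph{Necessity.} Suppose $(q,t)$ is incentive compatible and individually rational. First I would rewrite the IC inequality as $U_i(\theta_i)\ge U_i(r_i)+(\theta_i-r_i)Q_i(r_i)$ for all $\theta_i,r_i\in\Theta_i$. Interchanging the roles of $\theta_i$ and $r_i$ and adding the two resulting inequalities gives, for any $r_i<\theta_i$, the chain $Q_i(r_i)\le\frac{U_i(\theta_i)-U_i(r_i)}{\theta_i-r_i}\le Q_i(\theta_i)$. The first inequality is precisely monotonicity of $Q_i$. The full chain shows that the difference quotients of $U_i$ are bounded by $Q_i(\theta_i^{max})$, which is finite since $q_i\le W$ and, by Lemma~\ref{lemma:concavity}, $\psi_i$ is concave and non-decreasing on $[0,W]$, hence bounded by $\psi_i(W)<\infty$; thus $U_i$ is Lipschitz, hence absolutely continuous. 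Letting $\theta_i\downarrow r_i$ and then $\theta_i\uparrow r_i$ in the chain and using that the monotone function $Q_i$ is continuous at all but countably many points, one obtains $U_i'(r_i)=Q_i(r_i)$ for almost every $r_i$. The fundamental theorem of calculus for absolutely continuous functions then yields $U_i(\theta_i)=U_i(\theta_i^{min})+\int_{\theta_i^{min}}^{\theta_i}Q_i(s)\,ds$, and substituting $T_i(\theta_i)=\theta_iQ_i(\theta_i)-U_i(\theta_i)$ gives the stated payment formula with $K_i=-U_i(\theta_i^{min})=T_i(\theta_i^{min})-\theta_i^{min}Q_i(\theta_i^{min})$. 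Finally, since $\psi_i\ge 0$ forces $Q_i\ge 0$, the integral representation shows $U_i$ is non-decreasing, so individual rationality (i.e.\ $U_i(\theta_i)\ge 0$ for all $\theta_i$) is equivalent to $U_i(\theta_i^{min})\ge 0$, that is, $K_i\le 0$.

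\emph{Sufficiency.} Conversely, assume $Q_i$ is non-decreasing and $T_i$ has the stated form with $K_i\le 0$. Then $U_i(\theta_i,\theta_i)=-K_i+\int_{\theta_i^{min}}^{\theta_i}Q_i(s)\,ds\ge 0$ because $K_i\le 0$ and $Q_i\ge 0$, which gives individual rationality. For incentive compatibility, substituting the payment formula into $U_i(\theta_i,\theta_i)-U_i(\theta_i,r_i)=\theta_i\big(Q_i(\theta_i)-Q_i(r_i)\big)-\big(T_i(\theta_i)-T_i(r_i)\big)$ and simplifying yields $U_i(\theta_i,\theta_i)-U_i(\theta_i,r_i)=\int_{r_i}^{\theta_i}\big(Q_i(s)-Q_i(r_i)\big)\,ds$; by monotonicity of $Q_i$ the integrand has the same sign as $s-r_i$ on the interval of integration, so this integral is non-negative whether $\theta_i>r_i$ or $\theta_i<r_i$, establishing IC.

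\emph{Main obstacle.} The only step requiring care is the necessity direction's passage from the difference-quotient sandwich to the exact integral identity for $U_i$: one must justify absolute continuity of $U_i$ and the almost-everywhere identification $U_i'=Q_i$ before invoking the fundamental theorem of calculus. The convex/envelope structure of $U_i$ (equivalently, the sandwich together with almost-everywhere continuity of the monotone $Q_i$) is the cleanest way to handle this; everything else is routine manipulation of the definitions.
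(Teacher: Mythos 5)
Your proof is correct and follows essentially the same route as the paper's: monotonicity of $Q_i$ from the two crossed IC inequalities, the envelope identity $V_i'(\theta_i)=Q_i(\theta_i)$ integrated up from $\theta_i^{min}$ to recover the payment formula, and the same algebraic verification for sufficiency (your single integral $\int_{r_i}^{\theta_i}(Q_i(s)-Q_i(r_i))\,ds\ge 0$ just unifies the paper's two cases). The only cosmetic difference is that you justify the fundamental-theorem step via the difference-quotient sandwich and Lipschitz/absolute continuity, whereas the paper invokes convexity of $V_i$ as a maximum of affine functions; both are standard and equally valid here.
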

\begin{proof} See Appendix~\ref{sec:app_2}
\end{proof}
\subsubsection{The Seller's Optimization Problem}\label{SPC}
The seller's objective can be written as:
\begin{align}
 &\sum_{i=1}^{N}\mathds{E}_{\theta}\{ t_i(\theta)\} = \sum_{i=1}^{N} \mathds{E}_{\theta_i}[\mathds{E}_{\theta_{-i}}t(\theta_i,\theta_{-i})] \notag\\
 &= \sum_{i=1}^{N} \mathds{E}_{\theta_i}[T_i(\theta_i)]\label{eq:simplify_1}
\end{align}
Further, because of Theorem~\ref{thm:th1}, for any incentive compatible and individually rational mechanism, we can write each term in the summation in (\ref{eq:simplify_1}) as
 \begin{align}
&\mathbb{E}_{\theta_i}[T_i({\theta_i})] \notag\\
&=\mathbb{E}_{\theta_i}[K_i + \theta_iQ_i(\theta_i)  -\int_{\theta^{min}_i}^{\theta_i}Q_i(s)ds]\notag\\ &= K_i \notag\\ &+ \int_{\theta_i^{min}}^{\theta_i^{\max}}\left[\theta_iQ_i(\theta_i)-\int_{\theta_i^{min}}^{\theta_i}Q_i(s)ds\right]f_i(\theta_i)d\theta_i\label{eq:solve_1}
\end{align}
The integral in (\ref{eq:solve_1}) can be written as:
\begin{align}
 &\int_{\theta_i^{min}}^{\theta_i^{\max}}\theta_iQ_i(\theta_i)f_i(\theta_i)d\theta_i -\int_{\theta_i^{min}}^{\theta_i^{\max}}\int_{\theta^{min}_i}^{\theta_i}Q_i(s)ds f_i(\theta_i)d\theta_i \notag\\
 &=\int_{\theta_i^{min}}^{\theta_i^{\max}}\theta_iQ_i(\theta_i)f_i(\theta_i)d\theta_i
 -\int_{\theta_i^{min}}^{\theta_i^{\max}}Q_i(s)\int_{s}^{\theta_i^{max}}f_i(\theta_i)d\theta_ids \notag\\
 &=\int_{\theta_i^{min}}^{\theta_i^{\max}}\theta_iQ_i(\theta_i)f_i(\theta_i)d\theta_i -\int_{\theta_i^{min}}^{\theta_i^{\max}}Q_i(s)(1-F_i(s))ds \label{eq:proof_6}
 \end{align}
 Using the definition of $Q_i(\cdot)$ from (\ref{eq:ex_amt}) in (\ref{eq:proof_6}), we can write the right hand side of (\ref{eq:proof_6}) as
 \begin{align}
 & \int_{\theta_i^{min}}^{\theta_i^{\max}}\theta_i\int_{\Theta_{-i}}\psi_i(q_i(\theta_i,\theta_{-i}))f_{-i}(\theta_{-i})d\theta_{-i}f(\theta_i)d\theta_i \notag\\ 
 &- \int_{\theta_i^{min}}^{\theta_i^{\max}}\int_{\Theta_{-i}}\psi_i(q_i(s,\theta_{-i}))f_{-i}(\theta_{-i})d\theta_{-i}(1-F_i(s))ds \notag\\ 
 &= \int_{\Theta}\theta_i\psi_i(q_i(\theta))f(\theta)d\theta \notag\\ 
 &- \int_{\theta_i^{min}}^{\theta_i^{\max}}\int_{\Theta_{-i}}\psi_i(q_i(s,\theta_{-i}))f_{-i}(\theta_{-i})d\theta_{-i}\frac{(1-F_i(s))}{f_i(s)}f_i(s)ds \notag\\  
&= \int_{\Theta}\theta_i\psi_i(q_i(\theta))f(\theta)d\theta \notag\\ 
 &- \int_{\Theta}\psi_i(q_i(\theta))\frac{(1-F_i(\theta_i))}{f_i(\theta_i)}f(\theta)d\theta \notag\\  
& =\int_{\Theta}\left[\psi_i(q_i(\theta))\left(\theta_i-\frac{1-F_i(\theta_i)}{f_i(\theta_i)}\right)\right]f(\theta)d\theta \label{eq:integral} 
 \end{align}
 In the economics literature the term  $\left(\theta_i-\frac{1-F_i(\theta_i)}{f_i(\theta_i)}\right)$ appearing in the integral in (\ref{eq:integral}) is called \textit{virtual type}.
 \par
 Using (\ref{eq:simplify_1}), (\ref{eq:solve_1}) and (\ref{eq:integral}), we can write the total expected revenue as:
 \begin{align}
 &\sum_{i=1}^{N}K_i \notag\\ &+ \sum_{i=1}^{N}\int_{\Theta}\left[\psi_i(q_i(\theta))\left(\theta_i-\frac{1-F_i(\theta_i)}{f_i(\theta_i)}\right)\right]f(\theta)d\theta  
 \end{align}
 A feasible mechanism $(q,t)$  for which $K_i=0$, $i \in \mathcal{N}$ (recall that $K_i \leq 0$) and which maximizes 
\begin{align}  \sum_{i=1}^{N}\int_{\Theta}\left[\psi_i(q_i(\theta))\left(\theta_i-\frac{1-F_i(\theta_i)}{f_i(\theta_i)}\right)\right]f(\theta)d\theta\notag\\
=\int_{\Theta}\sum_{i=1}^N\left[\psi_i(q_i(\theta))\left(\theta_i-\frac{1-F_i(\theta_i)}{f_i(\theta_i)}\right)\right]f(\theta)d\theta\label{eq:final_seller_objective}
\end{align} while satisfying the conditions of Theorem~\ref{thm:th1} will be a revenue-maximizing, incentive compatible and individually rational mechanism. 
 \subsubsection{A Regularity Condition and A Candidate Solution}
 We impose the following assumption on the  virtual type of each user which is often called \textit{regularity condition}.
 \\
 \textbf{Assumption 1}: For each user $i$, $\left(\theta_i-\frac{1-F_i(\theta_i)}{f_i(\theta_i)}\right)$  is increasing in $\theta_i$.
 \par This assumption is satisfied if $f_i(\theta_i)$ is non-decreasing. For instance, the uniform distribution satisfies the assumption.
 \par
 We can now propose a candidate solution for the seller.
 \medskip
 \begin{theorem} \label{thm:solution}
 
  For each $\theta \in \Theta$, let $q_i(\theta)$, $i =1,2,\ldots,N$ be the solution of the following optimization problem:
 \begin{eqnarray} 
 \operatorname*{arg\,max}_x  \quad &&\sum_{i=1}^N \Bigg\{\psi_i(x_{i})\left(\theta_i-\frac{1-F_i(\theta_i)}{f_i(\theta_i)}\right) \Bigg\}\nonumber \\
  \mbox{subject to}\quad &&\sum_{i=1}^Nx_{i}\leq W. 
 \label{eq:opt_problem} 
 \end{eqnarray} 
 and let $t_i(\theta)$, $i =1,2,\ldots,N$ be given as:
 \begin{equation}
 t_i(\theta) = \theta_i\psi_i(q_i(\theta)) - \int_{\theta^{min}_i}^{\theta_i}\psi_i(q_i(s,\theta_{-i}))ds.
 \end{equation}
 Then, $(q,t)$ is an incentive compatible and individually rational mechanism that maximizes the seller's expected revenue.
 \end{theorem}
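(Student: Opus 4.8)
The plan is to check that the proposed pair $(q,t)$ meets the characterization of Theorem~\ref{thm:th1} with $K_i=0$ for every $i$, and then to compare its revenue with the upper bound derived in Section~\ref{SPC}. As a preliminary step I would note that $(q,t)$ is well defined and feasible: for each $\theta$ the set $\{x:\ x_i\ge 0,\ \sum_i x_i\le W\}$ is compact and each $\psi_i$ is continuous on $[0,W]$ (indeed concave, by Lemma~\ref{lemma:concavity}), so the maximum in (\ref{eq:opt_problem}) is attained, and a measurable selection $q_i(\theta)$ exists; the constraint $\sum_i x_i\le W$ gives feasibility and $0\le q_i(\theta)\le W$.

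The core of the argument is to show that $Q_i(r_i)=\mathds{E}_{\theta_{-i}}[\psi_i(q_i(r_i,\theta_{-i}))]$ is non-decreasing in $r_i$. Write $\nu_j(\theta_j)=\theta_j-\frac{1-F_j(\theta_j)}{f_j(\theta_j)}$ for user $j$'s virtual type. Fixing $\theta_{-i}$ and reports $r_i<r_i'$, let $x=q(r_i,\theta_{-i})$ and $x'=q(r_i',\theta_{-i})$ be the corresponding maximizers of (\ref{eq:opt_problem}). Using that $x$ is optimal (and $x'$ feasible) for the coefficient vector whose $i$-th entry is $\nu_i(r_i)$, and that $x'$ is optimal (and $x$ feasible) for the one whose $i$-th entry is $\nu_i(r_i')$ while all other entries $\nu_j(\theta_j)$ are common to both, and then adding the two inequalities, the $j\neq i$ terms cancel and one is left with $\big(\nu_i(r_i)-\nu_i(r_i')\big)\big(\psi_i(x_i)-\psi_i(x_i')\big)\ge 0$. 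By the regularity condition (Assumption~1), $\nu_i(r_i)\le\nu_i(r_i')$, so $\psi_i(q_i(r_i,\theta_{-i}))\le\psi_i(q_i(r_i',\theta_{-i}))$; integrating over $\theta_{-i}$ yields monotonicity of $Q_i$. I expect this to be the main obstacle — not because the revealed-preference manipulation is hard, but because this is exactly the point at which Assumption~1 is indispensable: without it the pointwise-optimal allocation can fail to induce a monotone $Q_i$, hence fail to be implementable.

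Next I would verify the payment identity. Taking $\mathds{E}_{\theta_{-i}}$ of $t_i(\theta)=\theta_i\psi_i(q_i(\theta))-\int_{\theta_i^{min}}^{\theta_i}\psi_i(q_i(s,\theta_{-i}))\,ds$ and interchanging the expectation with the inner integral by Tonelli's theorem (the integrand is nonnegative, since $\psi_i(q_i(\cdot))\ge\psi_i(0)=0$), one gets $T_i(r_i)=r_iQ_i(r_i)-\int_{\theta_i^{min}}^{r_i}Q_i(s)\,ds$, which is precisely (\ref{eq:tax_equation}) with $K_i=0\le 0$. By Theorem~\ref{thm:th1} the mechanism is therefore incentive compatible and individually rational. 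One should also confirm $t_i(\theta)\ge 0$, so the payment rule maps into $\textit{R}_+$: for $s\le\theta_i$, $\psi_i(q_i(s,\theta_{-i}))\le\psi_i(q_i(\theta))$ by the monotonicity just established, so the subtracted integral is at most $(\theta_i-\theta_i^{min})\psi_i(q_i(\theta))\le\theta_i\psi_i(q_i(\theta))$.

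Finally, for optimality I would invoke the revenue identity of Section~\ref{SPC}: any feasible incentive compatible and individually rational mechanism $(\tilde q,\tilde t)$ yields expected revenue $\sum_i K_i+\int_\Theta\sum_i\psi_i(\tilde q_i(\theta))\big(\theta_i-\frac{1-F_i(\theta_i)}{f_i(\theta_i)}\big)f(\theta)\,d\theta$ with each $K_i\le 0$, hence at most $\int_\Theta\sum_i\psi_i(\tilde q_i(\theta))\big(\theta_i-\frac{1-F_i(\theta_i)}{f_i(\theta_i)}\big)f(\theta)\,d\theta$. Since for every $\theta$ our $q(\theta)$ maximizes $\sum_i\psi_i(x_i)\big(\theta_i-\frac{1-F_i(\theta_i)}{f_i(\theta_i)}\big)$ over the feasible set, the integrand — hence the integral — is maximized by our $q$; and, as shown above, our mechanism attains $K_i=0$. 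Therefore no feasible incentive compatible and individually rational mechanism achieves strictly larger expected revenue, so $(q,t)$ is revenue-maximizing, which would complete the proof.
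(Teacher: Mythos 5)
Your proposal is correct and follows essentially the same route as the paper's own proof: verify the payment formula reduces to (\ref{eq:tax_equation}) with $K_i=0$, establish monotonicity of $Q_i$ via the two revealed-preference inequalities at $\theta_i=a,b$ combined with Assumption~1, and conclude optimality from the revenue decomposition with $K_i\leq 0$. The extra care you take (measurable selection, Tonelli, nonnegativity of $t_i$) is sound but not part of the paper's argument.
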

 \begin{proof}
By definition, $q_i(\theta)$, $i =1,2,\ldots,N$ achieves the maximum value of $\sum_{i=1}^N\left[\psi_i(q_i(\theta))\left(\theta_i-\frac{1-F_i(\theta_i)}{f_i(\theta_i)}\right)\right]$ for each $\theta$. Hence it maximizes  the integral in (\ref{eq:final_seller_objective}). \par 
We will now show that $(q,t)$ satisfies the characterization of incentive compatibility and individual rationality in Theorem~\ref{thm:th1} with $K_i=0$. \\

Recall that
\begin{align}
&T_i(r_i) = \mathds{E}_{\theta_{-i}}[t_i(r_i,\theta_{-i})] 
\end{align}
Using the equation for $t_i$ from Theorem~\ref{thm:solution}, we get
\begin{align}
&T_i(r_i)= r_i\int_{\Theta_{-i}}\psi_i(q_i(r_i,\theta_{-i}))f_{-i}(\theta_{-i})d\theta_{-i} \notag \\& - \int_{\Theta_{-i}}\int_{\theta^{min}_i}^{\theta_i}\psi_i(q_i(s,\theta_{-i}))dsf_{-i}(\theta_{-i})d\theta_{-i} \notag\\
&= \theta_iQ_i(r_i) - \int_{\theta^{min}_i}^{\theta_i}Q_i(s)ds
\end{align}
Thus, $T_i(\cdot)$ satisfies (\ref{eq:tax_equation}) of Theorem~\ref{thm:th1} with $K_i=0$. 
\par
We will now show that  for each $\theta_{-i}$, $\psi_i(q_i(\theta_i,\theta_{-i}))$ is non-decreasing in $\theta_{i}$. This, when averaged over $\theta_{-i}$, will imply monotonicity of $Q_i(\cdot)$. Consider any value of $\theta_{-i}$. Let $w_i(\theta_i) := \left(\theta_i-\frac{1-F_i(\theta_i)}{f_i(\theta_i)}\right)$. By assumption, $w_i(\theta_i)$ is increasing in $\theta_i$. Let $a,b \in \Theta_i$ with $a < b$. Let $(x_1^{a},x^{a}_2,\ldots,x^{a}_N)$ and $(x_1^{b},x^{b}_2,\ldots,x^{b}_N)$ be the solutions for the optimization problem (\ref{eq:opt_problem}) with $\theta_i=a$ and $b$ respectively. Then, we have that $q_i(a,\theta_{-i}) = x_i^a$ and $q_i(b,\theta_{-i}) = x_i^b$. Optimality of $(x_1^{a},x^{a}_2,\ldots,x^{a}_N)$ in (\ref{eq:opt_problem}) implies that
\begin{eqnarray}
&\psi_i(x^a_i)w_i(a) + \sum_{j \neq i}\psi_j(x^a_j)w_j(\theta_j) \notag\\ 
&\geq \psi_i(x^b_i)w_i(a) + \sum_{j \neq i}\psi_j(x^b_j)w_j(\theta_j) \label{eq:mon_1}
\end{eqnarray}
Similarly,
\begin{eqnarray}
&\psi_i(x^b_i)w_i(b) + \sum_{j \neq i}\psi_j(x^b_j)w_j(\theta_j) \notag\\ 
&\geq \psi_i(x^a_i)w_i(b) + \sum_{j \neq i}\psi_j(x^a_j)w_j(\theta_j) \label{eq:mon_2}
\end{eqnarray}
Summing (\ref{eq:mon_1}) and (\ref{eq:mon_2}) and rearranging terms we obtain 
\begin{equation}
\psi_i(x^b_i)(w_i(b)-w_i(a)) \geq \psi_i(x^a_i)(w_i(b)-w_i(a))\label{eq:mon_3}
\end{equation}
Since by Assumption 1 $(w_i(b)-w_i(a)) > 0$, (\ref{eq:mon_3}) implies $\psi_i(x^b_i) \geq \psi_i(x^a_i)$. That is, \[\psi_i(q_i(b,\theta_{-i})) \geq  \psi_i(q_i(a,\theta_{-i}))\] 
This establishes the non-decreasing property of $\psi_i(q_i(\theta_i,\theta_{-i}))$ in $\theta_i$.
 \end{proof}
\par

Theorem~\ref{thm:solution} thus identifies a mechanism that solves the seller's optimization problem. Note that for each $\theta$, finding the allocated spectrum for each user involves solving the optimization problem in (\ref{eq:opt_problem}). We now show that this optimization is a convex optimization problem.
 \begin{lemma}
The optimization problem in (\ref{eq:opt_problem}) is a convex-optimization problem.
\end{lemma}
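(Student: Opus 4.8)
The plan is to exhibit (\ref{eq:opt_problem}) as the minimization of a convex function over a convex set, which is the definition of a convex-optimization problem. First I would recall from Lemma~\ref{lemma:concavity} that each $\psi_i$ is non-decreasing and concave on $[0,W]$; also $\psi_i(0)=0$, since the rate $R(\cdot)$ in (\ref{eq:rate}) vanishes at $x=0$, so $\psi_i(x)\geq 0$ on $[0,W]$. Next I would note that the feasible region $\{x\in\mathbb{R}^N: x_i\geq 0 \text{ for all } i,\ \sum_{i=1}^N x_i\leq W\}$ is an intersection of half-spaces, hence a convex polytope.

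The one point that requires care is that the coefficients $w_i(\theta_i):=\theta_i-\frac{1-F_i(\theta_i)}{f_i(\theta_i)}$ (the virtual types) need not be nonnegative even under Assumption~1, and when $w_i(\theta_i)<0$ the term $w_i(\theta_i)\psi_i(x_i)$ is convex, not concave, so the objective of (\ref{eq:opt_problem}) is not concave in general. To handle this I would split $\mathcal{N}$ into $\mathcal{N}^{+}:=\{i:w_i(\theta_i)>0\}$ and its complement. For $i\notin\mathcal{N}^{+}$, the map $x_i\mapsto w_i(\theta_i)\psi_i(x_i)$ is non-increasing (a non-positive scalar times a non-decreasing function), so $x_i=0$ is optimal without loss; hence (\ref{eq:opt_problem}) has the same optimal value as $\max\{\sum_{i\in\mathcal{N}^{+}}w_i(\theta_i)\psi_i(x_i): x\geq 0,\ \sum_{i}x_i\leq W\}$. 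For $i\in\mathcal{N}^{+}$, $w_i(\theta_i)\psi_i(x_i)$ is a nonnegative multiple of a concave function, hence concave, and a finite sum of concave functions is concave; therefore $-\sum_{i\in\mathcal{N}^{+}}w_i(\theta_i)\psi_i(x_i)$ is convex, and (\ref{eq:opt_problem}) is equivalent to minimizing this convex function over the convex polytope above.

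I expect the sign of the virtual types to be the only real obstacle: overlooking it makes the naive "concave objective over linear constraints'' argument incorrect, whereas the reduction to $\mathcal{N}^{+}$ repairs it cleanly (and the uniform-distribution example after Assumption~1 already shows $w_i$ can be negative for small $\theta_i$). Everything else — concavity and monotonicity of $\psi_i$ from Lemma~\ref{lemma:concavity}, and convexity of the constraint set — is immediate, so once the bookkeeping over $\mathcal{N}^{+}$ is in place the conclusion follows directly from the definition of a convex program.
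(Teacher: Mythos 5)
Your proposal is correct and follows essentially the same route as the paper: both observe that the only obstacle is the possibly negative virtual type $w_i(\theta_i)$, set $x_i=0$ for those users (using $\psi_i(0)=0$ and monotonicity), and note that the remaining objective is a nonnegative combination of concave functions maximized over a convex polytope. The extra care you take in spelling out the convexity of the feasible set is fine but adds nothing beyond the paper's argument.
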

\begin{proof}
We know from Lemma~\ref{lemma:concavity} that $\psi_i(x_i)$ is a concave function of $x_i$. However, the objective in the (\ref{eq:opt_problem}) may not be concave since for some $i$, $\psi_i(x_i)$ may be weighted by a negative multiplier $w_i(\theta_i):= \left(\theta_i-\frac{1-F_i(\theta_i)}{f_i(\theta_i)}\right)$. If this multiplier is negative, then the objective function is maximized by choosing $x_i=0$ since $\psi_i(0)=0$. Thus, the objective function in (\ref{eq:opt_problem}) can be replaced by
\[\sum_{i:w_i(\theta_i) \geq 0} \psi_i(x_i)w_i(\theta_i) \]
This is now a concave function of $x_i,i=1,2,\ldots,N$. Hence, the maximization in (\ref{eq:opt_problem}) is equivalent to a convex optimization problem.
\end{proof}
\par
\subsubsection{Interpretation/Discussion of the Mechanism} \label{sec:interpret}
 An omniscient seller who knew the users' type could have charged each user the maximum price that user was willing to pay. Thus, given an allocation rule $q$, an omniscient observer could have obtained a tax amount equal to $\theta_i\psi_i(q_i(\theta))$ from user $i$ when the type realization was $\theta$. Our less informed seller, however, has to provide a subsidy of $\int_{\theta^{min}_i}^{\theta_i}\psi_i(q_i(s,\theta_{-i}))ds$ to user $i$, $i \in \mathcal{N}$ to ensure that user $i$ reveals its true type. 
 \par
The  tax paid by a user can be more intuitively explained using the following function:
\[Z_i(y,\theta_{-i}):= \inf\{s \in \Theta_i|\psi_i(q_i(s,\theta_{-i})) \geq y\} \]
Thus, $Z_i(y,\theta_{-i})$ is the minimum willingness to pay that user $i$ should report in order to get at least $y$ amount of rate when other users type is $\theta_{-i}$.   We also define the bandwidth  that user $i$ will obtain by reporting his type to be $\theta^{min}_i$ as
\[ q^{min}_i(\theta_{-i}) := q_i(\theta^{min}_i,\theta_{-i})\]
Note that if $\psi_i(q_i(s,\theta_{-i}))$ is a one to one function of $s$, then for $y$ in the range of this function, $Z_i(y,\theta_{-i}) = s$ if and only if  $\psi_i(q_i(s,\theta_{-i})) =y$.
The tax function for user $i$ is given as:
\begin{align}
 t_i(\theta) = \theta_i\psi_i(q_i(\theta)) - \int_{\theta^{min}_i}^{\theta_i}\psi_i(q_i(s,\theta_{-i}))ds
\end{align}
\begin{figure}
\begin{center}

\includegraphics[height=6cm,width=9cm]{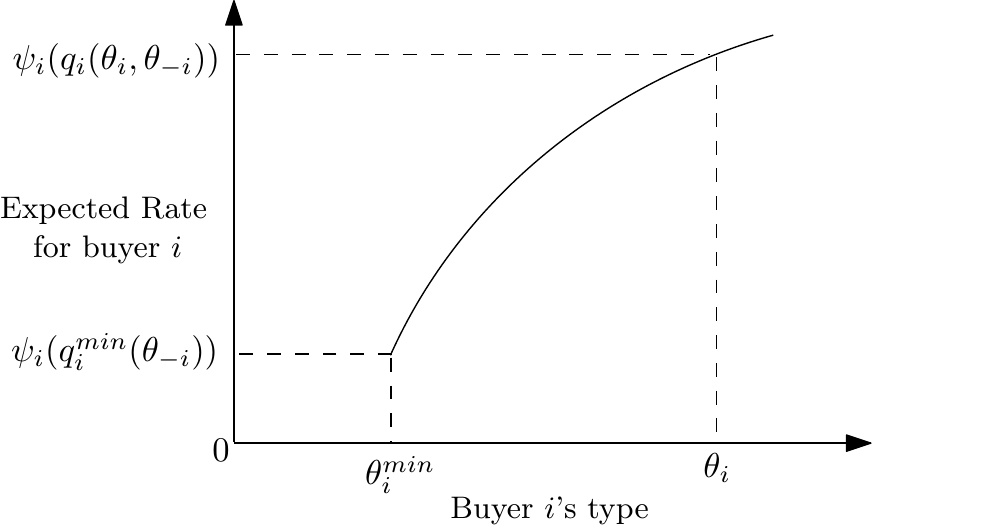}
\caption{Expected rate for user $i$ vs. user $i$'s type for a fixed $\theta_{-i}$}

\end{center} 
\end{figure}
Figure 1 shows the variation of user $i$'s expected rate as a function of its type for a given $\theta_{-i}$. The tax paid by user $i$ is equal to the area bounded by vertical lines at $0$ and $\theta_i$ and horizontal lines at $0$ and $\psi_i(q_i(\theta_i,\theta_{-i}))$ minus the area under the expected rate-type curve. An alternative evaluation of this area can be obtained by the following expression:
\begin{align} 
 t_i(\theta)=\theta^{min}_i\psi_i(q^{min}_i(\theta_{-i})) + \int_{\psi_i(q^{min}_i(\theta_{-i}))}^{\psi_i(q_i(\theta))}Z_i(y,\theta_{-i})dy.
\end{align}
Thus, each user pays a base amount of $\theta^{min}_i\psi_i(q^{min}_i(\theta_{-i}))$. In addition, for each infinitesimal increment in rate from $y$ to $y+dy$, the user is charged the minimum price that would obtain the rate $y$ when other users have types $\theta_{-i}$. 
\par
\subsubsection{Computational Aspects}\label{sec:computation}
On receiving the types from the users, the seller needs to solve a convex optimization problem to find the optimal allocations according to the mechanism in Theorem 2. Efficient computational methods are well-known for such computational problems. The computational bottleneck in the mechanism of Theorem 2 comes from the tax equation. In order to evaluate the tax for the user $i$, th seller needs to evaluate the integral $\int_{\theta^{min}_i}^{\theta_i}\psi_i(q_i(s,\theta_{-i}))ds$. To evaluate the integral, we need to know the allocation $q_i(s,\theta_{-i})$ for all $\theta^{min}_i \leq s \leq \theta_i$. Thus, the seller has to solve a series of convex optimization problems. In practice, the integral may be approximated by a Riemann sum, so that the seller has to solve a finite number of optimization problems.
\par
A consequence of approximating the tax function is that the seller can only guarantee \emph{approximate incentive compatibility} and \emph{approximate individual rationality}. In other words, if the seller calculates an under-approximation of the tax to within $\epsilon$ of the correct value, it can guarantee that users cannot increasing their utility by more than $\epsilon$ if they misreport their type or choose not to participate in the spectrum allocation process.
\section{Spread Spectrum Method} \label{sec:prob_form_SS}
In this Section, we extend the approach and analysis of Section \ref{sec:prob_form} to address the spread spectrum method of spectrum sharing. Here, the primary user can distribute a fixed amount of power among the secondary users. We assume that the users spread their alloted power equally over the entire spectrum band. Let $P_{total}$ be the total power that can be distributed and $\mathcal{N}=\{1,2,\cdots,N\}$ be the set of secondary users.
We now explain various components of our model in detail:
\par
\begin{enumerate}
\item \emph{The users:} We assume that each user is a distinct transmitter-receiver pair. Let $\underbar{P} = (P_1,P_2,\ldots, P_N)$ be the vector of power allocations for the $N$ secondary users. For $i,j\in\mathcal{N}$, let $h_{ij}$ be the channel gain between transmitter $i$ and receiver $j$. Then, the rate achieved by user $i$ is given as:
\begin{equation}\label{eq:SS_rate}
\tilde\psi_i(\underbar{P}) = Wlog\Big(1+\frac{h_{ii}P_i}{N_{0}W + \sum_{j\neq i} h_{ji}P_j}\Big),
\end{equation}
where $W$ is the spectrum bandwidth. Note that due to interference among the users, the rate achieved by user $i$ depends on the power allocations to \emph{all} the users.
We assume that all channel gains are fixed and known to all users and the spectrum owner. 
\par
As in Section \ref{sec:prob_form}, a user's utility is characterized by its type $\theta_i$. 
 If user $i$ has type $\theta_i$, its utility from the power profile $\underbar{P}$ and paying $t$ amount of money is given as:
\begin{equation}\label{eq:SS_utility}
u_i(\underbar{P},t,\theta_i) = \theta_{i}\tilde\psi_i(\underbar{P}) - t
\end{equation}
 
\par
We assume that $\theta_i $, $i \in \mathcal{N}$ are independent random variables; We assume that for each user $i$, $\theta_i$ is private information; We assume that $\theta_i \in \Theta_i := [\theta^{min}_i,\theta^{max}_i]$.  All users other than user $i$ and the seller have a prior probability density function $f_i(\cdot)$ (with the corresponding CDF being $F_i(\cdot)$) on $\theta_i$; we assume that $f_i(\theta_i) >0$, for $\theta_i \in [\theta_i^{min},\theta_i^{max}] $. We assume that the densities $f_i(\cdot), i=1,2,\cdots,N,$ are common knowledge.
\item \emph{The Seller:} We assume that the seller knows all the channel gains  and the distribution $f_i$, $i=1,2,\cdots,N,$ of each user's type. We assume that the seller's utility is the total money he gets from the users. 

\item \emph{The Mechanism:}  The seller announces an allocation rule $q=(q_1,q_2,\cdots,q_N)$ and a payment rule\\ $t=(t_1,t_2,\cdots,t_N)$,
\begin{align}
&q_i:\Theta \rightarrow [0,P_{total}] \quad \mbox{for} \  i=1,2,\cdots,N,\\
&t_i:\Theta \rightarrow \textit{R}_+ \quad \mbox{for} \  i=1,2,\cdots,N,
\end{align}
\begin{definition}
The mechanism   $(q,t)$ is \emph{feasible} if $\sum_{i=1}^{N}q_i(\theta) \leq P_{total}$.
\end{definition}
The seller asks the users to report their types. If the type vector reported is $\theta$, $q_i(\theta)$ is the amount of power given to user $i$ and $t_i(\theta)$ is the payment charged to user $i$.  We denote by $q_{1:N}(\theta)$ the N-tuple $(q_1(\theta),q_2(\theta),\ldots,q_N(\theta))$
\par

\end{enumerate}


We can now define incentive compatibility and individual rationality:
\begin{enumerate}
\item \emph{Incentive Compatibility:}  A mechanism $(q,t)$ is said to be incentive compatible if for each $i \in \mathcal{N}$ and $\theta_i \in \Theta_i$, we have
\begin{align}
 &\mathds{E}_{\theta_{-i}}\left[\theta_i\tilde\psi_i(q_{1:N}(\theta)) -  t_i(\theta)\right] \notag\\
 &\geq \mathds{E}_{\theta_{-i}}\left[\theta_i\tilde\psi_i(q_{1:N}(r_i,{\theta_{-i}}))- t_i(r_i,\theta_{-i})\right] \quad \forall \  r_i \in \Theta_i. \label{eq:SS_IC}
\end{align}
 
\item \emph{Individual Rationality:} A mechanism $(q,t)$ is said to be individually rational if for each $i \in \mathcal{N}$ and $\theta_i \in \Theta_i$, we have
\begin{align}
 &\mathds{E}_{\theta_{-i}}\left[\theta_i\tilde\psi_i(q_{1:N}(\theta)) -  t_i(\theta)\right] \geq 0. \label{eq:SS_IR}
\end{align}
\end{enumerate}
\par
We have the following problem for the seller.
\begin{problem}
 The sellers's optimization problem is to choose a feasible mechanism $(q,t)$  that satisfies equations (\ref{eq:SS_IC}) and (\ref{eq:SS_IR}) and maximizes his expected revenue given as:
\[ \mathds{E}_{\theta}\{\sum_{i=1}^{N} t_i(\theta)\}\]
 \end{problem}

\subsection{Analysis} \label{sec:analysis_SS}  
 \subsubsection{Characterizing Incentive Compatibility and Individual Rationality}
    In this Section, similarly to section \ref{sec:analysis}, we derive necessary and sufficient conditions for a mechanism to be incentive compatible and individually rational. Let $(q,t)$ be any mechanism selected by the seller. In order to characterize incentive compatibility and individual rationality for user $i$, we will adopt user $i$'s perspective. Let $\theta_i$ be the type of user $i$. User $i$ knows his own type. However, when the seller asks the user to report his type, he may report any type $r_i$ between $\theta_i^{min}$ and $\theta_i^{max}$. We define the following functions:
    \begin{definition} Given a mechanism $(q,t)$, we define for each $r_i \in \Theta_i$,
    \begin{eqnarray} \label{eq:ex_amt_2}
    \tilde{Q}_i(r_i) &:=& \mathds{E}_{\theta_{-i}}[\tilde\psi_i(q_{1:N}(r_i,\theta_{-i}))] \\
 \label{eq:ex_tax_2} T_i(r_i) &:=& \mathds{E}_{\theta_{-i}}[t_i(r_i,\theta_{-i})], \\
    U_i(\theta_i,r_i) &:=& \theta_i\tilde{Q}_i(r_i)-T_i(r_i).
    \end{eqnarray}
    $\tilde{Q}_i(r_i)$ is the expected rate under the given mechanism that user $i$ will get if he reports $r_i$ while all other users report truthfully. Similarly, $T_i(r_i)$ is the expected payment that user $i$ will make when it reports $r_i$. Also, $U_i(\theta_i,r_i)$ is the expected utility  for user $i$ if its type is $\theta_i$ and it reports $r_i$.
     \end{definition}


We can now characterize incentive compatibility and individual rationality by the following theorem.
\medskip
\begin{theorem} \label{thm:th3} 
A mechanism $(q,t)$ is incentive compatible and individually rational if and only if $\tilde{Q}_i(r_i)$ is non-decreasing in $r_i$ and 
\begin{eqnarray} \label{eq:tax_equation_2}
T_i({r_i})= K_i + r_i\tilde{Q}_i(r_i)  -\int_{\theta^{min}_i}^{r_i}\tilde{Q}_i(s)ds,
\end{eqnarray}
where $K_i = (T_i(\theta^{min}_i)- \theta^{min}_i\tilde{Q}_i(\theta^{min}_i)) \leq 0$.

\end{theorem}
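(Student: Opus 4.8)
The plan is to mirror exactly the proof of Theorem~\ref{thm:th1}, since Theorem~\ref{thm:th3} has the identical structure with $\tilde{Q}_i$ in place of $Q_i$ and $\tilde\psi_i$ in place of $\psi_i$; the only facts about $\tilde\psi_i$ that I will need are that it is nonnegative (which is immediate since the argument of the logarithm in (\ref{eq:SS_rate}) is at least $1$), so that $\tilde{Q}_i(r_i)\geq 0$ for all $r_i$. Throughout I fix a user $i$ and write $V_i(\theta_i):=U_i(\theta_i,\theta_i)=\theta_i\tilde{Q}_i(\theta_i)-T_i(\theta_i)$ for the truthful expected utility. The key observation is that incentive compatibility (\ref{eq:SS_IC}), rewritten via the definitions (\ref{eq:ex_amt_2})--(\ref{eq:ex_tax_2}), says precisely that $V_i(\theta_i)=\max_{r_i\in\Theta_i}\big[\theta_i\tilde{Q}_i(r_i)-T_i(r_i)\big]$, i.e. $V_i$ is the pointwise supremum of the affine functions $\theta_i\mapsto \theta_i\tilde{Q}_i(r_i)-T_i(r_i)$.

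For the \emph{necessity} direction, I would first take the IC inequality for the type pair $(\theta_i,r_i)$ and for the pair $(r_i,\theta_i)$ and add them, which yields $(\theta_i-r_i)\big(\tilde{Q}_i(\theta_i)-\tilde{Q}_i(r_i)\big)\geq 0$, establishing that $\tilde{Q}_i$ is non-decreasing. Next, writing the two one-sided IC inequalities as $V_i(\theta_i)-V_i(r_i)\geq(\theta_i-r_i)\tilde{Q}_i(r_i)$ and $V_i(\theta_i)-V_i(r_i)\leq(\theta_i-r_i)\tilde{Q}_i(\theta_i)$ sandwiches the difference quotient of $V_i$ between two bounded monotone functions; hence $V_i$ is Lipschitz on $\Theta_i$, therefore absolutely continuous, and $V_i'(\theta_i)=\tilde{Q}_i(\theta_i)$ at every point of continuity of $\tilde{Q}_i$ (all but countably many). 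Integrating gives $V_i(\theta_i)=V_i(\theta^{min}_i)+\int_{\theta^{min}_i}^{\theta_i}\tilde{Q}_i(s)\,ds$, which rearranges to (\ref{eq:tax_equation_2}) with $K_i:=-V_i(\theta^{min}_i)=T_i(\theta^{min}_i)-\theta^{min}_i\tilde{Q}_i(\theta^{min}_i)$. Finally, individual rationality (\ref{eq:SS_IR}) evaluated at $\theta_i=\theta^{min}_i$ gives $V_i(\theta^{min}_i)\geq 0$, i.e. $K_i\leq 0$.

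For the \emph{sufficiency} direction, assume $\tilde{Q}_i$ is non-decreasing and $T_i$ is given by (\ref{eq:tax_equation_2}) with $K_i\leq 0$. Substituting this formula into $U_i(\theta_i,\theta_i)-U_i(\theta_i,r_i)$ and simplifying, the $K_i$ terms and the $\theta_i\tilde{Q}_i$ terms cancel and one is left with $U_i(\theta_i,\theta_i)-U_i(\theta_i,r_i)=\int_{r_i}^{\theta_i}\big(\tilde{Q}_i(s)-\tilde{Q}_i(r_i)\big)\,ds$, which is $\geq 0$ for both $\theta_i>r_i$ and $\theta_i<r_i$ because $\tilde{Q}_i$ is non-decreasing (the integrand is nonnegative on $s>r_i$ and nonpositive on $s<r_i$, and the sign of $ds$ flips accordingly); this gives IC. For IR, $V_i(\theta_i)=\theta_i\tilde{Q}_i(\theta_i)-T_i(\theta_i)=-K_i+\int_{\theta^{min}_i}^{\theta_i}\tilde{Q}_i(s)\,ds\geq 0$ since $K_i\leq 0$ and $\tilde{Q}_i\geq 0$.

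The only genuinely non-routine point is the regularity step in the necessity argument, namely justifying that $V_i$ may be recovered as the integral of $\tilde{Q}_i$; I expect this to be the main obstacle to state carefully, and the cleanest route is the sandwich inequality above, which makes $V_i$ Lipschitz (hence absolutely continuous) with the envelope identity $V_i'=\tilde{Q}_i$ holding a.e. Everything else is algebraic and identical to the frequency-division case, so the proof can legitimately be presented as ``identical to the proof of Theorem~\ref{thm:th1}, using $\tilde\psi_i\geq 0$ in place of the corresponding property of $\psi_i$.''
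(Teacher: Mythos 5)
Your proposal is correct and follows essentially the same route as the paper, which proves Theorem~\ref{thm:th3} simply by invoking the argument for Theorem~\ref{thm:th1}: monotonicity of $\tilde{Q}_i$ from summing paired IC inequalities, the envelope identity $V_i'=\tilde{Q}_i$ integrated to get (\ref{eq:tax_equation_2}), IR at $\theta_i^{min}$ for $K_i\leq 0$, and a direct computation for sufficiency. The only (cosmetic) difference is that you justify the integral representation of $V_i$ via the Lipschitz/absolute-continuity sandwich, whereas the paper obtains it from convexity of $V_i$ as a pointwise maximum of affine functions together with one-sided difference quotients; both are standard and equally valid here since $\tilde{Q}_i$ is bounded.
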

\begin{proof} The proof  follows the same arguments as the proof of Theorem ~\ref{thm:th1}.
\end{proof}
\subsubsection{Seller's Optimization Problem}
The seller's objective can be written as:
\begin{align}
 &\sum_{i=1}^{N}\mathds{E}_{\theta}\{ t_i(\theta)\} = \sum_{i=1}^{N} \mathds{E}_{\theta_i}[\mathds{E}_{\theta_{-i}}t(\theta_i,\theta_{-i})] \notag\\
 &= \sum_{i=1}^{N} \mathds{E}_{\theta_i}[T_i(\theta_i)]\label{eq:simplify_1_2}
\end{align}
Further, because of Theorem~\ref{thm:th3}, for any incentive compatible and individually rational mechanism, we can write each term in the summation in (\ref{eq:simplify_1}) as
 \begin{align}
&\mathbb{E}_{\theta_i}[T_i({\theta_i})] \notag\\
&=\mathbb{E}_{\theta_i}[K_i + \theta_i\tilde{Q}_i(\theta_i)  -\int_{\theta^{min}_i}^{\theta_i}\tilde{Q}_i(s)ds]\notag\\ &= K_i \notag\\ &+ \int_{\theta_i^{min}}^{\theta_i^{\max}}\left[\theta_i\tilde{Q}_i(\theta_i)-\int_{\theta_i^{min}}^{\theta_i}\tilde{Q}_i(s)ds\right]f_i(\theta_i)d\theta_i\label{eq:solve_1_2}
\end{align}
 \par
Now, by following similar arguments as in section  \ref{SPC},  we can write the total expected revenue as:
 \begin{align}
 &\sum_{i=1}^{N}K_i \notag\\ &+ \sum_{i=1}^{N}\int_{\Theta}\left[\tilde\psi_i(q_{i:N}(\theta))\left(\theta_i-\frac{1-F_i(\theta_i)}{f_i(\theta_i)}\right)\right]f(\theta)d\theta  
 \end{align}
 A feasible mechanism $(q,t)$  for which $K_i=0$, $i \in \mathcal{N}$ (recall that $K_i \leq 0$) and which maximizes 
\begin{align}  \sum_{i=1}^{N}\int_{\Theta}\left[\tilde\psi_i(q_{i:N}(\theta))\left(\theta_i-\frac{1-F_i(\theta_i)}{f_i(\theta_i)}\right)\right]f(\theta)d\theta\notag\\
=\int_{\Theta}\sum_{i=1}^N\left[\tilde\psi_i(q_{i:N}(\theta))\left(\theta_i-\frac{1-F_i(\theta_i)}{f_i(\theta_i)}\right)\right]f(\theta)d\theta\label{eq:final_seller_objective_2}
\end{align} while satisfying the conditions of Theorem~\ref{thm:th3} will be a revenue-maximizing, incentive compatible and individually rational mechanism. 
 \par
 We can now propose a candidate solution for the seller.
 \medskip
 \begin{theorem} \label{thm:solution_SS}
 
  For each $\theta \in \Theta$, let $q_i(\theta)$, $i =1,2,\ldots,N$ be the solution of the following optimization problem:
 \begin{eqnarray} 
 \operatorname*{arg\,max}_{x_{1:N}}  \quad &&\sum_{i=1}^N \Bigg\{\tilde\psi_i(x_{1:N})\left(\theta_i-\frac{1-F_i(\theta_i)}{f_i(\theta_i)}\right) \Bigg\}\nonumber \\
  \mbox{subject to}\quad &&\sum_{i=1}^Nx_{i}\leq P_{total}, 
 \label{eq:opt_problem_2} 
 \end{eqnarray}
 where $x_{1:N}=(x_1,x_2,\cdots,x_N)$, 
 and let $t_i(\theta)$, $i =1,2,\ldots,N$ be given as:
 \begin{equation}
 t_i(\theta) = \theta_i\tilde\psi_i(q_{1:N}(\theta)) - \int_{\theta^{min}_i}^{\theta_i}\tilde\psi_i(q_{1:N}(s,\theta_{-i}))ds.
 \end{equation}
 Then, if the regularity condition of Assumption 1 is true, $(q,t)$ is an incentive compatible and individually rational mechanism that maximizes the seller's expected revenue.
 \end{theorem}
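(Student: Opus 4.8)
The plan is to transcribe, step for step, the proof of Theorem~\ref{thm:solution}, checking at each point that the coupling of the users through interference in $\tilde\psi_i$ does not invalidate the argument. Write $w_i(\theta_i):=\theta_i-\frac{1-F_i(\theta_i)}{f_i(\theta_i)}$ for the virtual type, which is increasing under the regularity assumption. There are three parts to carry out: optimality of the revenue, the payment characterization, and monotonicity of $\tilde{Q}_i$.

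For the revenue, observe that by construction $q_{1:N}(\theta)$ is, for every $\theta$, a maximizer over feasible power vectors of $\sum_{i=1}^N\tilde\psi_i(x_{1:N})w_i(\theta_i)$, hence it maximizes the integral in (\ref{eq:final_seller_objective_2}). Since, exactly as in Section~\ref{sec:analysis} but now invoking Theorem~\ref{thm:th3}, the expected revenue of \emph{any} incentive compatible and individually rational mechanism equals $\sum_i K_i+\int_\Theta\sum_i\tilde\psi_i(q_{1:N}(\theta))w_i(\theta_i)f(\theta)\,d\theta$ with each $K_i\le 0$, taking $K_i=0$ together with this $q$ yields an upper bound on the achievable revenue; it then suffices to verify that the proposed pair $(q,t)$ is itself feasible, incentive compatible and individually rational, so that the bound is attained. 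Feasibility is immediate from the constraint $\sum_i x_i\le P_{total}$ in (\ref{eq:opt_problem_2}).

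For the payment characterization, I would take $\mathds{E}_{\theta_{-i}}$ of the stated formula for $t_i$ and interchange the expectation with the integral over $s$ (Tonelli), obtaining $T_i(r_i)=r_i\tilde{Q}_i(r_i)-\int_{\theta_i^{min}}^{r_i}\tilde{Q}_i(s)\,ds$, i.e.\ (\ref{eq:tax_equation_2}) with $K_i=0$; this is the same computation as in the frequency division case, with only the averaged object, now $\tilde\psi_i(q_{1:N}(\cdot,\theta_{-i}))$, changed. For monotonicity of $\tilde{Q}_i$, I would fix $\theta_{-i}$, show $\tilde\psi_i(q_{1:N}(\theta_i,\theta_{-i}))$ is non-decreasing in $\theta_i$, and average over $\theta_{-i}$. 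Fixing $a<b$ in $\Theta_i$ and letting $x^a_{1:N},x^b_{1:N}$ be maximizers of (\ref{eq:opt_problem_2}) at $(a,\theta_{-i})$ and $(b,\theta_{-i})$, adding the two optimality inequalities makes the blocks $\sum_{j\ne i}\tilde\psi_j(x^a_{1:N})w_j(\theta_j)$ and $\sum_{j\ne i}\tilde\psi_j(x^b_{1:N})w_j(\theta_j)$ cancel — each appears once on each side, because the weights $w_j(\theta_j)$, $j\ne i$, are fixed when only $\theta_i$ varies and the allocation is held fixed within each inequality — leaving $\tilde\psi_i(x^b_{1:N})\big(w_i(b)-w_i(a)\big)\ge\tilde\psi_i(x^a_{1:N})\big(w_i(b)-w_i(a)\big)$, and since $w_i(b)-w_i(a)>0$ we conclude $\tilde\psi_i(x^b_{1:N})\ge\tilde\psi_i(x^a_{1:N})$. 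This holds for \emph{any} choice of maximizers, which is important because, unlike (\ref{eq:opt_problem}), problem (\ref{eq:opt_problem_2}) need not be convex ($\underbar{P}\mapsto\tilde\psi_i(\underbar{P})$ is not concave) and need not have a unique solution, though a maximizer exists by continuity of $\tilde\psi_i$ and compactness of the feasible set. With $\tilde{Q}_i$ non-decreasing and the payment formula verified, Theorem~\ref{thm:th3} delivers incentive compatibility and individual rationality.

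The step I expect to be the main obstacle is the monotonicity argument: one has to be sure the interference terms genuinely cancel in the two-inequality comparison, and that non-convexity and possible non-uniqueness of the allocation problem do not break the monotonicity conclusion. Both are handled as above, and the rest is a direct transcription of the proof of Theorem~\ref{thm:solution}.
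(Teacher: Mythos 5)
Your proposal is correct and follows essentially the same route as the paper's own proof in Appendix~\ref{sec:thm4}: pointwise maximization of the virtual-surplus integrand for revenue optimality, direct computation of $T_i$ to verify (\ref{eq:tax_equation_2}) with $K_i=0$, and the two-inequality summation argument (with the interference terms cancelling) to establish monotonicity of $\tilde{Q}_i$ via Theorem~\ref{thm:th3}. Your added remarks on existence of maximizers and the fact that the monotonicity argument is insensitive to non-uniqueness of solutions of the non-convex problem (\ref{eq:opt_problem_2}) are sound refinements the paper leaves implicit, but they do not change the approach.
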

 \begin{proof} See Appendix \ref{sec:thm4}.
 \end{proof}
\par

Theorem~\ref{thm:solution_SS} thus identifies a mechanism that solves the seller's optimization problem. Note that for each $\theta$, finding the allocated spectrum for each user involves solving the optimization problem in (\ref{eq:opt_problem}).
\par
 The mechanism proposed in Theorem \ref{thm:solution_SS} for the spread spectrum problem can be interpreted in a manner similar to the one presented in section \ref{sec:interpret}. However, solving the optimization problem in \eqref{eq:opt_problem_2} is computationally more difficult than \eqref{eq:opt_problem}, because of the following reasons:
\begin{itemize}
\item $\psi_i(.)$ is a concave function, but $\tilde\psi_i(.)$ is not necessarily concave.
\item $\psi_i(.)$ is a one variable function in comparison to $\tilde\psi_i(.)$ which is multi-variable.
\end{itemize}

Numerical solution of the optimization problem in \eqref{eq:opt_problem_2} would require algorithmic techniques and approximations for non-convex optimization problems, \cite{pytlak2009}.

\section{Conclusions} \label{sec:con}
We considered the frequency division and spread spectrum methods of spectrum sharing. We derived  incentive compatible, individually rational and revenue maximizing mechanisms for a primary user that can allocate spectrum/power to secondary users and charge them payments. We assumed that the secondary users are strategic and  that the  secondary users' private informations (types) are independent random variables with densities that are common knowledge among the primary and the secondary users.  
\par
The linear relationship between a secondary user's utility and the expected rate it can achieve is a critical assumption of our analysis. This allowed us to completely characterize a user's private information by a single parameter $\theta_i$. The characterization of incentive compatible and individually rational mechanism obtained in Theorem~1 and Theorem~3 is critically dependent on the uni-dimensionality of each user's private information as captured by its type $\theta_i$. Revenue maximizing mechanisms with general models of users' utilities and multi-dimensional private information remain an open problem.

\bibliographystyle{IEEEtran}
\bibliography{references}

\begin{thebibliography}{10}
\providecommand{\url}[1]{#1}
\csname url@samestyle\endcsname
\providecommand{\newblock}{\relax}
\providecommand{\bibinfo}[2]{#2}
\providecommand{\BIBentrySTDinterwordspacing}{\spaceskip=0pt\relax}
\providecommand{\BIBentryALTinterwordstretchfactor}{4}
\providecommand{\BIBentryALTinterwordspacing}{\spaceskip=\fontdimen2\font plus
\BIBentryALTinterwordstretchfactor\fontdimen3\font minus
  \fontdimen4\font\relax}
\providecommand{\BIBforeignlanguage}[2]{{%
\expandafter\ifx\csname l@#1\endcsname\relax
\typeout{** WARNING: IEEEtran.bst: No hyphenation pattern has been}%
\typeout{** loaded for the language `#1'. Using the pattern for}%
\typeout{** the default language instead.}%
\else
\language=\csname l@#1\endcsname
\fi
#2}}
\providecommand{\BIBdecl}{\relax}
\BIBdecl

\bibitem{chiang2007}
R.~Chiang, G.~Rowe, and K.~Sowerby, ``A quantitative analysis of spectral
  occupancy measurements for cognitive radio,'' \emph{in IEEE 65th Vehicular
  Technology Conference, 2007. (VTC)}, 2007.

\bibitem{42}
J.~Bae, E.~Beigmann, R.~Berry, M.~Honig, and R.~Vohra, ``Sequential bandwidth
  and power auctions for distributed spectrum sharing,'' \emph{IEEE Journal on
  Selected Areas in Communications}, vol.~26, no.~7, pp. 1193--1203, 2008.

\bibitem{liu2009}
J.~Jia, Q.~Zhang, Q.~Zhang, and M.~Liu, ``Revenue generation for truthful
  spectrum auction in dynamic spectrum access,'' \emph{In proc. of ACM
  International Symposium on Mobile Ad Hoc Networking and Computing (MobiHoc)},
  2009.

\bibitem{101}
R.~Etkin, A.~Parekh, and D.~Tse, ``Spectrum sharing for unlicenssed bands,''
  \emph{IEEE Trans. on Selected Area in Communications}, vol.~25, no.~3, 2007.

\bibitem{102}
J.~Huang, R.~Berry, and L.~Honig, ``Distributed interference compensation for
  wireless networks,'' \emph{IEEE Trans. on Selected Area in Communications},
  vol.~24, no.~5, 2006.

\bibitem{38}
A.~Kakhbod and D.~Teneketzis, ``Power allocation and spectrum sharing in
  multi-user, multi-channel systems with strategic users,'' in \emph{49th IEEE
  Conference on Decision and Control (CDC), Atlanta, Georgia}, 2010.

\bibitem{104}
W.~Yu, G.~Gitins, and J.~Cioffi, ``Distributed interference compensation for
  wireless networks,'' \emph{IEEE Trans. on Selected Area in Communications},
  vol.~20, no.~5, 2002.

\bibitem{tilman2010}
T.~B\"orgers, ``An introduction to the theory of mechanism design,'' draft
  manuscript available at: http://www-personal.umich.edu/, forthcoming, Oxford
  University Press, 2010.

\bibitem{Klemperer2004}
P.~Klemperer, ``Auctions: Theory and practice,'' princeton University Press,
  2004.

\bibitem{krishna2002}
V.~Krishna, ``Auction theory,'' academic Press, 2002.

\bibitem{milgrom2000}
P.~Milgrom, ``Putting auction theory to work: The simultaneous ascending
  auction,'' \emph{Journal of Political Economy}, vol. 108, no.~2, pp.
  245--272, 2000.

\bibitem{myerson}
R.~Myerson, ``Optimal auction design,'' \emph{Math. Oper. Res.}, pp. 58--73,
  1981.

\bibitem{pytlak2009}
R.~Pytlak, \emph{Conjugate Gradient Algorithms in Nonconvex
  Optimization}.\hskip 1em plus 0.5em minus 0.4em\relax Springer Berlin
  Heidelberg, 2009.

\end{thebibliography}

\appendices
\section{Proof of Lemma 1} \label{sec:concavity}
Consider the function $\psi(\cdot)$ defined in \eqref{eq:ex_rate}. Then,
\begin{eqnarray}
\psi^{\prime}(x)&=&\int\Big[\log(1+\frac{h_{ii}P}{N_{0}x})\notag\\ &-&\frac{h_{ii}P}{N_{0}x+h_{ii}P}\Big]g(h_{ii})dh_{ii}, \label{eq:app_0} 
\end{eqnarray}
\begin{eqnarray}
\psi^{\prime\prime}(x) &=& \int\Big[-\frac{h_{ii}P}{(h_{ii}P+N_{0}x)x} \notag\\ &+&\frac{h_{ii}PN_{0}}{(h_{ii}P+N_{0}x)^2}\Big] g(h_{ii})dh_{ii}\\
&=&\int\Big[-\frac{(h_{ii}P)^2}{(h_{ii}P+N_{0}x)^2x}\Big] g(h_{ii})dh_{ii} \notag\\
&\leq& 0 \label{eq:app_1}
\end{eqnarray}

Equation (\ref{eq:app_1}) establishes the concavity of $\psi(x)$. Further, by (\ref{eq:app_0}), $\psi_i^{\prime}(0) = +\infty$ and $\lim_{x \to \infty}\psi^{\prime}(x) = 0$. This combined with the fact that $\psi^{\prime}(x)$ is a non-increasing function (because of $\psi^{\prime\prime}(x)\leq 0$ ), implies that $\psi^{\prime}(x) > 0$, for $x \geq 0$. Thus,   $\psi(x)$ is a non-decreasing function of $x$.

\section{Proof of Theorem 1}\label{sec:app_2}
\emph{Sufficiency:} First assume that $(q,t)$ is a mechanism for which $Q_i(r_i)$ is non-decreasing in $r_i$ and equation (\ref{eq:tax_equation}) is true. We will show that $(q,t)$ is incentive compatible and individually rational for user $i$. For any $\theta_i \in \Theta_i$, we have
\begin{align}
U_i(\theta_i,\theta_i) &= \theta_iQ_i(\theta_i)-T_i(\theta_i) \notag\\
&= \int_{\theta^{min}_i}^{\theta_i}Q_i(s)ds -K_i \label{eq:proof_1}\\
&\geq 0, \label{eq:proof_2}
\end{align}
where we used (\ref{eq:tax_equation}) in (\ref{eq:proof_1}) and the non-negativity of $Q_i$ and of ($-K_i$)  in (\ref{eq:proof_2}). Thus, $(q,t)$ is individually rational for user $i$. Further, 
\begin{align}
&U_i(\theta_i,\theta_i) - U_i(\theta_i,r_i) \notag\\
&= \int_{\theta^{min}_i}^{\theta_i}Q_i(s)ds - \theta_iQ(r_i) + r_iQ(r_i) \notag\\
&- \int_{\theta^{min}_i}^{r_i}Q_i(s)ds \label{eq:proof_3}
\end{align}
Consider the case when $r_i < \theta_i$. Then, the right hand side of (\ref{eq:proof_3}) can be written as
\begin{align}
& \int_{r_i}^{\theta_i}[Q_i(s)ds] - (\theta_i-r_i)Q(r_i) \notag \\
& \geq 0, \label{eq:proof_4}
\end{align}
where we used the non-decreasing nature of $Q_i$ in (\ref{eq:proof_4}). Similarly, if $r_i > \theta_i$, the right hand side of (\ref{eq:proof_3}) can be written as
\begin{align}
& -\int^{r_i}_{\theta_i}[Q_i(s)ds] + (r_i-\theta_i)Q(r_i) \notag \\
& \geq 0, \label{eq:proof_5}
\end{align}
which again follows from the non-decreasing nature of $Q_i$. Thus, we have that 
\[ U_i(\theta_i,\theta_i)\geq U_i(\theta_i,r_i),  \]
for all $\theta_i,r_i \in \Theta_i$, which establishes incentive compatibility for user $i$. 
\par
\emph{Necessity:} Let $(q,t)$ be an incentive compatible and individually rational mechanism. Let $a,b \in \Theta_i$ with $a<b$. Incentive compatibility implies that:
\begin{eqnarray}
aQ_i(a)-T_i(a)&\geq& aQ_i(b)-T_i(b) \label{eq:nec_1}
\end{eqnarray}
and
\begin{eqnarray}
bQ_i(b)-T_i(b)&\geq& bQ_i(a)-T_i(a) \label{eq:nec_2}
\end{eqnarray}
Adding (\ref{eq:nec_1}) and (\ref{eq:nec_2}) and rearranging terms we obtain
\begin{equation}
Q_i(b)(b-a) \geq Q_i(a)(b-a)
\end{equation}
Since $(b-a)>0$, we must have $Q_i(b) \geq Q_i(a)$-which establishes monotonicity of $Q_i$.
\par
We define $V_i(\theta_i) := U_i(\theta_i,\theta_i)$. That is, $V_i(\theta_i)$ is the expected utility of user $i$ with type $\theta_i$ under truthful reporting. Because of incentive compatibility, we have
\begin{align*}
V_i(\theta_i) &= \operatorname*{max}_{r_i \in \Theta_i} U_i(\theta_i,r_i)\\
&= \operatorname*{max}_{r_i \in \Theta_i} \left\{\theta_iQ_i(r_i) - T_i(r_i)\right\},
\end{align*}
which implies that $V_i(\theta_i)$ is the maximum of a family of affine functions of $\theta_i$. Thus, $V_i(\theta_i)$ is a convex function and is differentiable everywhere except for at most countably many points.
\par
Consider the following limit
\begin{align}
&\lim_{\delta \to 0} \frac{V_i(\theta_i+\delta)-V_i(\theta_i)}{\delta} \notag\\
&\geq \lim_{\delta \to 0} \frac{U_i(\theta_i+\delta,\theta_i)-V_i(\theta_i)}{\delta} \notag\\
&= \lim_{\delta \to 0} \frac{(\theta_i+\delta)Q_i(\theta_i)-T_i(\theta_i)-\theta_iQ_i(\theta_i) + T_i(\theta_i)}{\delta} \notag\\
&=Q_i(\theta_i) \label{eq:right_der} 
\end{align}
Similarly, we have
\begin{align}
&\lim_{\delta \to 0} \frac{V_i(\theta_i)-V_i(\theta_i-\delta)}{\delta} \notag\\
&\leq \lim_{\delta \to 0} \frac{V_i(\theta_i)-U_i(\theta_i-\delta,\theta_i)}{\delta} \notag\\
&= \lim_{\delta \to 0} \frac{\theta_iQ_i(\theta_i) - T_i(\theta_i)- (\theta_i-\delta)Q_i(\theta_i)+T_i(\theta_i)}{\delta} \notag\\
&=Q_i(\theta_i) \label{eq:left_der} 
\end{align} 
Equations (\ref{eq:right_der}) and (\ref{eq:left_der}) imply that $V_i^{\prime}(\theta_i) = Q_i(\theta_i)$. Thus, for any $r_i \in \Theta_i$,
\begin{align}
&V_i(r_i) = V_i(\theta^{min}_i) + \int_{\theta^i_{min}}^{r_i}Q_i(s)ds \notag\\
&\implies r_iQ_i(r_i)-T_i(r_i) = \theta^{min}_iQ_i(\theta^{min}_i) -T_i(\theta^{min}_i) \notag\\
&+ \int_{\theta_i^{min}}^{r_i}Q_i(s)ds \label{eq:app2_1}
\end{align}
Rearranging (\ref{eq:app2_1}) gives 
\begin{align}
T_i({r_i})=& (T_i(\theta^{min}_i) -\theta^{min}_iQ_i(\theta^{min}_i)) \notag\\&+ r_iQ_i(r_i)  -\int_{\theta^{min}_i}^{r_i}Q_i(s)ds \label{eq:proof_of_tax}
\end{align}
Defining $K_i = (T_i(\theta^{min}_i) -\theta^{min}_iQ_i(\theta^{min}_i)) $, we get (\ref{eq:tax_equation}) of Theorem~\ref{thm:th1} from (\ref{eq:proof_of_tax}). Note that individual rationality at $\theta^{min}_i$ implies that \[\theta^{min}_iQ_i(\theta^{min}_i) - T_i(\theta^{min}_i) \geq 0,\]
which implies that $K_i \leq 0$.

\section{Proof of Theorem 4} \label{sec:thm4}
By definition, $q_i(\theta)$, $i =1,2,\ldots,N$ achieves the maximum value of $\sum_{i=1}^N\left[\tilde\psi_i(q_{1:N}(\theta))\left(\theta_i-\frac{1-F_i(\theta_i)}{f_i(\theta_i)}\right)\right]$ for each $\theta$. Hence it maximizes  the integral in (\ref{eq:final_seller_objective}). \par 

We will now show that $(q,t)$ satisfies the characterization of incentive compatibility and individual rationality in Theorem~\ref{thm:th3} with $K_i=0$.
\par
\begin{align}
&T_i(r_i) = \mathds{E}_{\theta_{-i}}[t_i(r_i,\theta_{-i})] \notag \\
&= \theta_i\int_{\Theta_{-i}}\tilde\psi_i(q_{1:N}(r_i,\theta_{-i}))f_{-i}(\theta_{-i})d\theta_{-i} \notag \\& - \int_{\Theta_{-i}}\int_{\theta^{min}_i}^{\theta_i}\tilde\psi_i(q_{1:N}(s,\theta_{-i}))dsf_{-i}(\theta_{-i})d\theta_{-i} \notag\\
&= \theta_i\tilde{Q}_i(r_i) - \int_{\theta^{min}_i}^{\theta_i}\tilde{Q}_i(s)ds
\end{align}
Thus, $T_i(\cdot)$ satisfies (\ref{eq:tax_equation_2}) of Theorem~\ref{thm:th3} with $K_i=0$. 
\par
We will now show that  for each $\theta_{-i}$, $\tilde\psi_i(q_{1:N}(\theta_i,\theta_{-i}))$ is non-decreasing in $\theta_{i}$. This, when averaged over $\theta_{-i}$, will imply monotonicity of $\tilde{Q}_i(\cdot)$.\\
Consider any value of $\theta_{-i}$. Let $w_i(\theta_i) := \left(\theta_i-\frac{1-F_i(\theta_i)}{f_i(\theta_i)}\right)$. By assumption, $w_i(\theta_i)$ is increasing in $\theta_i$. Let $a,b \in \Theta_i$ with $a < b$. Let $x_{1:N}^{a}=(x_1^{a},x^{a}_2,\ldots,x^{a}_N)$ and $x_{1:N}^{b}=(x_1^{b},x^{b}_2,\ldots,x^{b}_N)$ be the solutions for the optimization problem (\ref{eq:opt_problem}) with $\theta_i=a$ and $b$ respectively. Then, we must have,
\begin{eqnarray}
&\tilde\psi_i(x_{1:N}^a)w_i(a) + \sum_{j \neq i}\tilde\psi_j(x_{1:N}^a)w_j(\theta_j) \notag\\ 
&\geq \tilde\psi_i(x_{1:N}^b)w_i(a) + \sum_{j \neq i}\tilde\psi_j(x_{1:N}^b)w_j(\theta_j) \label{eq:mon_1_2}
\end{eqnarray}
Similarly,
\begin{eqnarray}
&\tilde\psi_i(x_{1:N}^b)w_i(b) + \sum_{j \neq i}\tilde\psi_j(x_{1:N}^b)w_j(\theta_j) \notag\\ 
&\geq \tilde\psi_i(x_{1:N}^a)w_i(b) + \sum_{j \neq i}\tilde\psi_j(x_{1:N}^a)w_j(\theta_j) \label{eq:mon_2_2}
\end{eqnarray}
Summing (\ref{eq:mon_1_2}) and (\ref{eq:mon_2_2}) and rearranging terms we obtain
\begin{equation}
\tilde\psi_i(x_{1:N}^b)(w_i(b)-w_i(a)) \geq \tilde\psi_i(x_{1:N}^a)(w_i(b)-w_i(a))\label{eq:mon_3_2}
\end{equation}
Since by Assumption 1 $(w_i(b)-w_i(a)) > 0$, (\ref{eq:mon_3_2}) implies $\tilde\psi_i(x_{1:N}^b) \geq \tilde\psi_i(x_{1:N}^a)$. This establishes the monotonicity of $\tilde\psi_i(q_{1:N}(\theta_i,\theta_{-i}))$ in $\theta_i$.

\end{document}


\section{A theorem}

\stmt{thrm}{sample}{Socrates is mortal.}

\refstmt{sample} can either be proven using data (the fact that
Socrates is dead), or by the proof which is provided in the appendix.

\section{Appendix}
In this appendix, the reader will find proofs of theorems not given in the text.

\stmtproof{sample}{Socrates is a man. All men are mortal.}

\rptstmtwithproof{sample}

\comment{Notice that you can put the \stmtproof{sample}{...} anywhere
you want, including right after your statement, just before using
\rptstmtwithproof, or a separate file (then use \input{proofs.tex}). That
way, neither your finished paper nor your source will be cluttered with
proofs.}